\newtheorem{theorem}{Theorem}[section]
\newtheorem{lemma}[theorem]{Lemma}
\newtheorem{corollary}[theorem]{Corollary}
\newtheorem{proposition}[theorem]{Proposition}
\newtheorem{fact}[theorem]{Fact}
\newtheorem{conjecture}[theorem]{Conjecture}
\title{Kalai's conjecture in $r$-partite $r$-graphs}
\author{Maya Stein\footnote{Department of Mathematical Engineering, University of Chile, and Center for Mathematical Modeling,   UMI 2807 CNRS. Supported by Fondecyt Regular Grant 1183080 and by CONICYT + PIA/Apoyo a centros cient\'ificos y tecnol\'ogicos de excelencia con financiamiento Basal, C\'odigo AFB170001.}\bigskip \\ University of Chile\\ }
\begin{document}
\maketitle

\begin{abstract}
Kalai conjectured that every $n$-vertex  $r$-uniform hypergraph with more than $\frac{t-1}{r} {n \choose r-1}$ edges  contains all tight $r$-trees of some fixed size~$t$. 
We prove Kalai's conjecture for $r$-partite $r$-uniform hypergraphs. 
Our result is asymptotically best possible up to replacing  the term $\frac{t-1}{r}$ with the term $\frac{t-r+1}{r}$.

We apply our main result  in graphs to show an upper bound for the Tur\'an number of trees.
\end{abstract}

\section{Introduction}

For graphs, the well-known Erd\H os-S\'os conjecture from 1963 states that any graph with more than $(t-1)\frac n2$ edges contains,  as subgraphs, all trees with $t$ edges. 
In 1984, Kalai introduced a natural generalisation of this conjecture to uniform hypergraphs.
For simplicity, from now on $r$-uniform hypergraphs will be called {\it $r$-graphs}.

In order to be able to state Kalai's conjecture, we need to clarify the notion of a {\em tree} in an $r$-graph. The definition we will use  relies on the following construction. Start with an $r$-edge $e_1$ and the $r$ vertices it contains: This is~$T_1$. Now, in every step $i$, we may add a new edge~$e_i$. It is required that $e_i$ contains precisely one new vertex $v_i$, and  that $e_i\setminus \{v_i\}$ is a subset of some edge of~$T_{i-1}$. The new hypergraph is $T_i$.  Any hypergraph that can be constructed in this way will be called a {\em tight $r$-tree}.

Here is Kalai's generalisation of the  Erd\H os-S\'os conjecture.

\begin{conjecture}[Kalai 1984, see~\cite{FranklFuredi87}]\label{kalaiC}
Let $H$ be an $r$-graph on $n$ vertices with more than $\frac{t-1}{r} {n \choose r-1}$ edges.
Then $H$ contains every tight $r$-tree $T$ having $t$ edges.
\end{conjecture}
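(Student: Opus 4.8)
The plan is to recast the edge bound as a statement about codegrees and then embed $T$ greedily along its defining construction order. For an $(r-1)$-set $S \subseteq V(H)$, let $d(S)$ denote its \emph{codegree}, the number of edges of $H$ containing $S$. Since each edge of $H$ contains exactly $r$ distinct $(r-1)$-subsets, summing codegrees gives $\sum_{S} d(S) = r\,|E(H)| > (t-1)\binom{n}{r-1}$, so the average codegree of an $(r-1)$-set exceeds $t-1$. Meanwhile the construction of a tight $r$-tree supplies a canonical embedding order: write the edges as $e_1, \dots, e_t$ in the order they were added, let $v_i$ be the new vertex introduced by $e_i$ for $i \ge 2$, and recall that $e_i \setminus \{v_i\}$ sits inside an edge of $T_{i-1}$, all of whose vertices precede $v_i$.

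The first step is an \textbf{embedding lemma}: if every $(r-1)$-set of $H$ that lies in some edge has codegree at least $t$, then $H$ contains $T$. To see this, map the vertices of $e_1$ to an arbitrary edge of $H$ and then process $e_2, \dots, e_t$ in order. At step $i$ the image of $e_i \setminus \{v_i\}$ is an already-embedded $(r-1)$-set $S$ lying in an edge, so $d(S) \ge t$; the vertices already used and lying outside $S$ number at most $(r+i-2)-(r-1) = i-1 \le t-1$, so among the $\ge t$ vertices $w$ with $S \cup \{w\} \in E(H)$ at least one is unused, and we send $v_i$ there. Thus a \emph{minimum positive codegree} of exactly $t$ already forces every tight $r$-tree with $t$ edges, and the whole problem reduces to producing such a local condition from the global average bound $d > t-1$.

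To close the gap between the average codegree $>t-1$ and the minimum $\ge t$ required above, I would run the induction on $t$, removing the last (leaf) edge. Let $e_t$ be the final edge of $T$, $v_t$ its new vertex, and $S_0 = e_t \setminus \{v_t\}$ its parent $(r-1)$-set, which lies in an edge of $T' := T - e_t$. Since $|E(H)| > \frac{t-1}{r}\binom{n}{r-1} > \frac{t-2}{r}\binom{n}{r-1}$, the inductive hypothesis embeds $T'$ into $H$ via some $\phi$; writing $S = \phi(S_0)$, exactly as in the embedding lemma one checks that $T$ extends provided $d(S) \ge t$. The average bound guarantees that a positive proportion of all $(r-1)$-sets have codegree $\ge t$, so the task is to \emph{steer} the embedding $\phi$ so that the image of $S_0$ lands on such a set. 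This is the point at which I expect the real difficulty: the naive induction controls only the \emph{number} of edges of the embedded copy of $T'$, not the codegree of the one distinguished $(r-1)$-set $S$.

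Consequently the main obstacle is bridging average to minimum codegree while losing nothing in the constant, and this is not a technicality: for $r = 2$ it is precisely the Erd\H os--S\'os conjecture, which is open. A realistic route is to strengthen the inductive hypothesis so that $T'$ is embedded together with a guarantee that its leaf-parent slot can be re-placed among many high-codegree sets, using an exchange (rotation) argument that reroutes earlier greedy choices to free a high-codegree extension; one must then show these rerouting steps terminate and never violate the tight-tree adjacency constraints. Additional structure on the host is what makes this feasible: when $H$ is $r$-partite the used vertices are distributed across the $r$ colour classes, which simultaneously controls the count of forbidden extensions and lets the rotation respect the partition. Carrying the same control through for an arbitrary $r$-graph, with no part structure to lean on, is the crux that separates the full conjecture from its $r$-partite relaxation.
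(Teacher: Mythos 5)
You have not proved the statement, and you could not have: Conjecture~\ref{kalaiC} is stated in the paper as an open conjecture, not a theorem, and the paper offers no proof of it --- for $r=2$ it is exactly the Erd\H os--S\'os conjecture, which remains open. Your own write-up concedes this at the decisive point. Concretely, the gap is the bridge from the average codegree bound $\sum_S d(S) = r|E(H)| > (t-1)\binom{n}{r-1}$ to a nonempty subhypergraph of minimum positive codegree at least $t$. The standard cleaning argument (delete all edges through any shadow $(r-1)$-set of codegree below $t$) costs up to $t-1$ edges per shadow set, hence up to $(t-1)\binom{n}{r-1}$ edges in total, so it only yields the conjecture with the bound weakened by a factor of $r$ --- this is precisely the observation~\eqref{badbound}, and your ``embedding lemma'' is the same greedy embedding used there, which is fine as far as it goes. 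Your inductive plan founders exactly where you say it does: embedding $T-e_t$ gives no control over the codegree of the one distinguished image set $\phi(S_0)$, and the proposed exchange/rotation repair is only a sketch, with no argument that reroutings terminate, preserve the tight-tree adjacencies, or reach a high-codegree set; making that work in full generality would essentially resolve the conjecture itself.

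It is worth seeing how the paper's $r$-partite result (Theorem~\ref{r-part}) evades the very obstruction you ran into, since you gesture at the right intuition without the mechanism. The point of Lemma~\ref{r-part_mindeg} is that one never needs a single set of codegree $t$: by Fact~\ref{tunique} the tree $T$ has a unique $r$-partition with class sizes $t_1+\dots+t_r = t+r-1$, and the greedy embedding into class $V_i$ only ever needs codegree $t_i$ on shadow sets missing $V_i$, because at most $t_i$ tree vertices land in that class. The cleaning process is then run with the class-dependent thresholds $t_i$, so a shadow set missing $V_i$ costs at most $t_i-1$ deleted edges, and writing $h_i$ for the number of shadow sets missing $V_i$ the total cost is $\sum_i (t_i-1)h_i$. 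Setting $\delta_i = t_i - \frac{t+r-1}{r}$ (so $\sum_i \delta_i = 0$) and ordering $h_1 \ge \dots \ge h_r$ against $\delta_1 \le \dots \le \delta_r$, a Chebyshev-type rearrangement bound gives $\sum_i \delta_i h_i \le 0$, hence the cost is at most $\frac{t-1}{r}|\partial H| < |E(H)|$ and a nonempty core survives. This distributes the ``codegree budget'' across the $r$ classes and recovers exactly the factor $r$ that the naive cleaning loses --- but it leans irreducibly on the partition, which is why, as you correctly conclude, the method does not extend to arbitrary $r$-graphs.
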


As already  noted in~\cite{FranklFuredi87}, it follows from constructions using a result of R\"odl~\cite{roedlPackCov} (or alternatively, one can use designs whose existence is guaranteed by Keevash's work~\cite{keevashDesigns}) that this conjecture is tight as long as certain divisibility conditions are satisfied. 

It has been observed  (see e.g.~\cite{FurediJiang15}) that if the bound in Conjecture~\ref{kalaiC} is multiplied with a factor of $r$ then the conjecture holds:
\begin{equation}\label{badbound}
\text{Conjecture~\ref{kalaiC} holds if the bound is replaced by  $(t-1) \textstyle\binom{n}{r-1}$.}
\end{equation}
 The reason is that we can successively delete edges from the host $r$-graph until arriving at an $r$-graph $H'$ having the property that each $(r-1)$-subset~$S$ of~$V(H)$ either belongs to $0$ or to at least $t$ edges.  Then we can embed the tree greedily into $H'$, following the given ordering of the edges. 

Not much is known on Kalai's conjecture in general, except for the case $r=2$. We refer to~\cite{tree-survey} for an overview of known results in this case. 

The known results for $r\ge 3$ all focus on specific types of tight $r$-trees. 
Frankl and F\"uredi~\cite{FranklFuredi87} show that Conjecture~\ref{kalaiC} holds for all `star-shaped' tight $r$-trees, that is, tight $r$-trees  whose first edge  intersects each other edge in $r-1$ vertices. 
F\"uredi, Jiang, Kostochka, Mubayi and Verstra\"ete~\cite{FJKMV2019, FJKMV18} show versions of Conjecture~\ref{kalaiC} for a broadened variant of the concept of `star-shaped' (instead of the first edge, there is a constant number of first edges intersecting all other edges), and F\"uredi and Jiang~\cite{FurediJiang15} show the conjecture for special types of tight $r$-trees with many leaves.

For tight $r$-paths, bounds on the number of edges of the host $r$-graph below the bound $(t-1) {n \choose r-1}$ from~\eqref{badbound} were established by Patk\'os~\cite{patkos} and by F\"uredi, Jiang, Kostochka, Mubayi and Verstra\"ete~\cite{FJKMV17}. Namely, the bound in~\eqref{badbound} can be replaced by $\frac{t-1}2 {n \choose r-1}$ if $r$ is even, and by a similar bound if $r$ is odd.
An asymptotic version of Kalai's conjecture for tight $r$-paths whose order is  linear in  the order $n$ of the  host $r$-graph has been confirmed by Allen, B\"ottcher, Cooley and Mycroft~\cite{ABCM17} for large $n$. 

Also, the authors of~\cite{FJKMV2019} show that if one  replaces the bound in Kalai's conjecture with $\frac{t-1}{r}|\partial H|$, an equivalent conjecture is obtained. (As it is usual, we define the {\em shadow} $\partial H$ of an $r$-graph $H$ as the set of all $(r-1)$-sets contained in edges of $H$.)

Our first contribution is a solution of Kalai's  conjecture for $r$-partite $r$-graphs.
We will actually show our result with the bound from~\cite{FJKMV2019}, that is, the term~${n \choose r-1}$ from Kalai's conjecture will be replaced with the smaller term~$|\partial H|$. 

\begin{theorem}\label{r-part}\label{r-partK}
Let $r\ge 2$ and let $H$ be an $r$-partite $r$-graph. If $H$ has more than $\frac{t-1}r|\partial H|$ edges, 
then $H$ contains every tight $r$-tree $T$ having $t$ edges.
\end{theorem}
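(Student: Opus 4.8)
The plan is to reduce Theorem~\ref{r-part} to a \emph{color-preserving} greedy embedding, and to make such an embedding possible by first passing to a cleaned-up subgraph whose ``cleaning budget'' is balanced against the color distribution of $T$ through an averaging argument. The first observation is that every tight $r$-tree is properly $r$-colorable: color the $r$ vertices of $e_1$ with the $r$ colors, and when $e_i$ is added, its $r-1$ old vertices (lying in a single edge of $T_{i-1}$) already carry $r-1$ distinct colors, so we may give $v_i$ the unique missing color. Writing $V_1,\dots,V_r$ for the parts of $H$, I aim for an embedding sending color class $\sigma^{-1}(j)$ into $V_j$ for some bijection $\sigma$ from colors to parts. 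Let $1+m_c$ be the number of color-$c$ vertices of $T$, so $\sum_{c} m_c = t-1$. Splitting the shadow by \emph{type}, let $\partial_j H$ be the set of $(r-1)$-sets in $\partial H$ that miss part $j$; since edges are rainbow, the omitted part is determined by the set, so the $\partial_j H$ partition $\partial H$. For $S\in\partial_j H$ let $d_j(S)$ count the edges containing $S$.

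Next I would check that a color-preserving greedy embedding succeeds as long as the host has a favorable degree profile. Following the construction order, suppose $T_{i-1}$ is embedded with all its edges mapped to edges of the host. When $v_i$, of color $\gamma$, is added, the image of $e_i\setminus\{v_i\}$ is a set $S'\in\partial_j H$ with $j=\sigma(\gamma)$ lying inside an already-embedded edge, and $v_i$ must go to a vertex $w\in V_j$ with $S'\cup\{w\}$ an edge. The only forbidden choices are the images of the previously placed color-$\gamma$ vertices, of which there are at most $m_\gamma$ (as $v_i$ is one of the $1+m_\gamma$ color-$\gamma$ vertices of $T$), and none of these lies in $S'$. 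Hence the step succeeds provided $d_j(S')\ge m_\gamma+1$. This motivates the target: find a nonempty subgraph $H'\subseteq H$ such that, for every part $j$, each $S\in\partial_j H'$ satisfies $d^{H'}_j(S)\ge m_{\sigma^{-1}(j)}+1$.

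To produce $H'$, I would fix $\sigma$ (chosen below) and run the cleaning process: while some $S\in\partial_j H'$ has $1\le d^{H'}_j(S)\le m_{\sigma^{-1}(j)}$, delete all edges through $S$. Each type-$j$ set is processed at most once and, when processed, triggers the deletion of at most $m_{\sigma^{-1}(j)}$ edges; charging each deleted edge to the set that removed it shows that the total number of deleted edges is at most $\sum_{j} m_{\sigma^{-1}(j)}\,|\partial_j H|$, using the \emph{original} shadow sizes. On termination, every type-$j$ set still in the shadow has degree at least $m_{\sigma^{-1}(j)}+1$, exactly the profile needed above. The whole scheme works as soon as this deletion bound is strictly below $|E(H)|$, so that $H'$ is nonempty. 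Here the choice of $\sigma$ is the point: averaging $\sum_{j} m_{\sigma^{-1}(j)}\,|\partial_j H|$ over all bijections $\sigma$ gives $\frac{1}{r}\bigl(\sum_c m_c\bigr)\bigl(\sum_j|\partial_j H|\bigr)=\frac{t-1}{r}\,|\partial H|$, which by hypothesis is less than $|E(H)|$. Thus some $\sigma$ attains at most this average, and for that $\sigma$ the cleaning leaves $H'$ nonempty; running the greedy embedding of the previous paragraph inside $H'$ then yields a copy of $T$.

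The main obstacle is precisely the adaptivity captured by the averaging step: a single uniform degree threshold fails, because an adversarial tree (for instance a tight star) concentrates almost all vertices in one color, forcing the threshold $m_c+1$ up to roughly $t$ for that color. The fix is to \emph{match heavy colors of $T$ to sparse parts of $H$} (those $j$ with small $|\partial_j H|$); the rearrangement/averaging identity above guarantees that a good matching exists and pins the total budget to exactly $\frac{t-1}{r}|\partial H|$, which is what recovers the factor $r$ over the trivial bound in~\eqref{badbound}. The remaining verifications are routine: that the type decomposition of the shadow is a genuine partition, that the charging in the cleaning counts each edge once, and that at each greedy step the forbidden set really has size at most $m_\gamma$ and is disjoint from $S'$.
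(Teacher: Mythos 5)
Your proposal is correct and follows essentially the same route as the paper: both pass to a nonempty subhypergraph in which every shadow set missing part $j$ has codegree at least the size of the colour class of $T$ assigned to that part, bound the edges lost in the cleaning process by $\sum_j (m_{\sigma^{-1}(j)})\,|\partial_j H|$, and then embed $T$ greedily along its construction order (this is exactly the content of the paper's Lemma~\ref{r-part_mindeg} combined with Fact~\ref{tunique}). The only difference is how the good assignment of colour classes to parts is found: you average over all $r!$ bijections to show some $\sigma$ meets the budget $\frac{t-1}{r}|\partial H|$, while the paper explicitly takes the anti-sorted matching (largest shadow type with smallest class) and verifies it via a rearrangement-type inequality --- your averaging step is a slightly slicker derivation of the same bound.
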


The proof of Theorem~\ref{r-part} relies on an auxiliary lemma, Lemma~\ref{r-part_mindeg}, which might be interesting in its own right.	
Lemma~\ref{r-part_mindeg} is a variation of the earlier observation  on subhypergraphs of convenient codegree that led to~\eqref{badbound}  (as usual, we will say $S\in\partial H$ has {\em codegree} $c$ if $S$ lies in $c$ edges of $H$).
The novelty in Lemma~\ref{r-part_mindeg}  is that it allows for a different minimum codegree into each of the partition classes. For instance, for bipartite graphs of average degree exceeding~$t_1+t_2-2$, the lemma yields a subgraph  having minimum degree at least~$t_1$ in one direction, and  $t_2$ in the other direction, for any integers $t_1$, $t_2$ (see Corollary~\ref{coro:r-part_mindeg}). For $r$-partite $r$-graphs, Lemma~\ref{r-part_mindeg} gives an analogous statement based on codegrees.

Theorem~\ref{r-part} is not very far from best possible. 
We will see in Proposition~\ref{notbetter} that there are 
$r$-partite $r$-graphs $H$ not containing all  tight $r$-trees with $t$ edges fulfilling 
\begin{equation}\label{Hexa}|E(H)|\sim \frac{t-r+1}r|\partial H|.\end{equation}
 For $r\ge 3$, this might indicate some room for a small improvement of the bound $\frac{t-1}r|\partial H|$ from  Theorem~\ref{r-part}, with other methods than the ones used here. 

 Theorem~\ref{r-partK} also has an interesting application. Namely, 
for graphs, Theorem~\ref{r-partK} can be used to obtain an  upper
 bound for the {\it Tur\'an number} $ex(n,T)$ of a tree $T$ (this is the maximum number of edges a graph on $n$ vertices can have without necessarily containing~$T$ as a subgraph).
 
Observe that for $r=2$, the bound implied by~\eqref{badbound} for the Tur\'an number of a $t$-edge tree $T$  is 
\begin{equation}\label{erdo}ex(n,T)\le (t-1)n,\end{equation}
 which is  a factor of $2$ away from the bound $ex(n,T)\le \frac{(t-1)n}2$ one can cal\-culate from the Erd\H os--S\'os conjecture.
We will see in Proposition~\ref{betterbound} that the  bound~\eqref{erdo}  can be replaced by the slightly better bound 
\begin{equation}\label{turan}
ex(n,T)\le\frac{t}{t+1}(t-1)n.
\end{equation}

Moreover, if $t$ is even, the term $\frac{t}{t+1}$ can be replaced with the term $\frac{t-1}{t}$.
 We achieve the bound~\eqref{turan} by considering a maximum {\it $2$-cut}, that is, a bipartition  of the vertices of a graph $G$ that maximises the number of edges crossing the bipartition. A result of Alon, Krivelevich and Sudakov~\cite{AKS05} states that, if a fixed tree is excluded from $G$, then one can guarantee that substantially more than half of the edges of $G$ cross some  $2$-cut. We then apply Theorem~\ref{r-partK} to the bipartite graph spanned by the edges in the cut.

The paper is organised as follows. 
We will state and prove Lemma~\ref{r-part_mindeg} and use it to prove  Theorem~\ref{r-part} in Section~\ref{sec:proof}. In Section~\ref{sec:example} we will prove that Theorem~\ref{r-part} is not far from best possible in the above described sense, at least for balanced trees, by exhibiting $r$-graphs $H$ fulfilling~\eqref{Hexa}. Finally, in Section~\ref{sec:betterbound}, we will prove~\eqref{turan}, our Tur\'an number bound for trees, in Proposition~\ref{betterbound}.

\section{Proof of Theorem~\ref{r-partK}}\label{sec:proof}

The main ingredient for the proof of Theorem~\ref{r-part} is Lemma~\ref{r-part_mindeg} below.
For convenience, let us give a quick definition before we state the lemma. 

If $H$ is an $r$-partite $r$-graph, we say that $\delta_{(1,2,\ldots,r)} (H)\ge (t_1, t_2,\ldots,t_r)$ 
if there is a way of labeling the partition classes of $H$ as $V_1, V_2, \ldots,V_r$ such that for each $i\in[r]$,
every  $S\in \partial H$ missing $V_i$ is contained in at least $t_{i}$ edges of~$H$.

\begin{lemma}\label{r-part_mindeg}
Let $r,  t, t_1, t_2, \ldots, t_r\in\mathbb N$ such that $$t_1 + t_2+ \ldots + t_r=t+r-1,$$ and let $H$ be an $r$-partite $r$-graph  with more than $\frac{t-1}r |\partial H|$ edges.
Then there is a non-empty $r$-graph $H'\subseteq H$ such  that $$
\delta_{(1,2,\ldots,r)} (H')\ge (t_1, t_2,\ldots,t_r).$$
\end{lemma}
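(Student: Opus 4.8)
The plan is to prove Lemma~\ref{r-part_mindeg} by a deletion argument, iteratively removing edges that violate one of the required codegree conditions until we either reach a non-empty subhypergraph satisfying all conditions simultaneously, or else derive a contradiction with the edge-count hypothesis $|E(H)| > \frac{t-1}{r}|\partial H|$.
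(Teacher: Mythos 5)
Your one-sentence plan does point at the same strategy the paper uses (iterated deletion of edges through ``bad'' $(r-1)$-sets, then a counting contradiction), but as written it is only a plan, and it omits the one step where the lemma actually has content. If you fix an arbitrary labeling $V_1,\ldots,V_r$ and delete, each $(r-1)$-set $S$ missing $V_i$ is responsible for at most $t_i-1$ deleted edges, so the total loss is at most $\sum_{i=1}^r (t_i-1)h_i$, where $h_i$ is the number of shadow elements avoiding $V_i$. You need this to be at most $\frac{t-1}{r}|\partial H| = \frac{1}{r}\bigl(\sum_i (t_i-1)\bigr)\bigl(\sum_i h_i\bigr)$, and that inequality is \emph{false} for a general labeling: by Chebyshev's sum inequality it holds when $(t_i-1)_i$ and $(h_i)_i$ are oppositely ordered, and can fail when they are similarly ordered. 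A concrete failure: for $r=2$, $t=2$, take a star with centre in $V_1$; demanding degree $\ge 2$ on the leaf side deletes everything, while the opposite assignment succeeds. The definition of $\delta_{(1,\ldots,r)}$ deliberately allows you to choose the labeling, and the paper's proof exploits exactly this: it orders $t_1\le\cdots\le t_r$ and $h_1\ge\cdots\ge h_r$, and then closes the count with a rearrangement-type estimate (writing $t_i-1=\frac{t-1}{r}+\delta_i$ with $\sum_i\delta_i=0$ and showing $\sum_i\delta_i h_i\le 0$). Without specifying which codegree threshold is assigned to which class, and without this inequality, the deletion argument does not terminate in a non-empty hypergraph, so you should regard this as a genuine gap rather than a suppressed routine detail.
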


\filbreak
Lemma~\ref{r-part_mindeg} has the following corollary.

\begin{corollary}\label{coro:r-part_mindeg}
For all $t_1, t_2\in\mathbb N$ every bipartite graph $G$ with $d(G)>t_1+t_2-2$ has  a non-empty  subgraph $G'=(V_1,V_2)$  such that  each vertex in $V_i$ has degree at least $t_i$ in $G'$, for $i=1,2$.
\end{corollary}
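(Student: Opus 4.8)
The plan is to read the corollary off from Lemma~\ref{r-part_mindeg} in the case $r=2$; the only content is translating both the hypothesis and the conclusion into ordinary graph language. To set up, I would put $t:=t_1+t_2-1$, so that $t_1+t_2=t+(r-1)$ with $r=2$, which is the arithmetic condition required by the lemma, and note that $t-1=t_1+t_2-2$.

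First I would verify that the average-degree hypothesis yields the edge-count hypothesis of the lemma. With $d(G)=2|E(G)|/|V(G)|$, the assumption $d(G)>t_1+t_2-2=t-1$ rearranges to $|E(G)|>\frac{t-1}{2}|V(G)|$. For a graph ($r=2$) the shadow $\partial G$ consists of the singletons $\{v\}$ with $v$ non-isolated, so $|\partial G|\le|V(G)|$ and hence $|E(G)|>\frac{t-1}{2}|V(G)|\ge\frac{t-1}{2}|\partial G|$. Thus $G$ satisfies the hypothesis of Lemma~\ref{r-part_mindeg} (isolated vertices, if any, only make this inequality easier).

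Next I would apply Lemma~\ref{r-part_mindeg} with $r=2$ and parameters $t_1,t_2$ to obtain a non-empty subgraph $G'\subseteq G$ together with a labeling of its two partition classes as $U_1,U_2$ witnessing $\delta_{(1,2)}(G')\ge(t_1,t_2)$. Now I would unwind this in the graph case: for $r=2$ each $S\in\partial G'$ is a singleton $\{v\}$, its codegree is exactly the degree of $v$ in $G'$, and $S$ ``misses $U_i$'' precisely when $v$ lies in the opposite class. Hence the $i=1$ condition says every vertex of $U_2$ has degree at least $t_1$, and the $i=2$ condition says every vertex of $U_1$ has degree at least $t_2$. Relabeling $V_1:=U_2$ and $V_2:=U_1$ (that is, swapping which class we call first) then gives $G'=(V_1,V_2)$ in which each vertex of $V_i$ has degree at least $t_i$, as required.

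The argument has no genuine obstacle, since the statement is a direct specialisation of the lemma; the only points demanding care are the bookkeeping, namely checking that ``$S$ misses $U_i$'' becomes ``$v$ lies in the opposite class'' for singletons, and choosing which part to name $V_1$ so that the two degree bounds attach to the intended classes.
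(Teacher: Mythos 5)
Your proposal is correct and is exactly the derivation the paper intends: the paper states the corollary without proof as an immediate specialisation of Lemma~\ref{r-part_mindeg} to $r=2$, and your bookkeeping (setting $t=t_1+t_2-1$, bounding $|\partial G|\le |V(G)|$, and swapping the class labels so the degree bounds attach to the right sides) fills in precisely the intended details.
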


Before proving Lemma~\ref{r-part_mindeg}, let us show how it implies Theorem~\ref{r-part}. For this, we will need the  following fact which is immediate from the definition of tight $r$-trees.

\begin{fact}\label{tunique}
Every tight $r$-tree has a unique $r$-partition.
\end{fact}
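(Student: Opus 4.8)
The plan is to prove both existence and uniqueness of the $r$-partition by induction along a construction sequence $T_1 \subseteq T_2 \subseteq \cdots \subseteq T_t = T$ witnessing that $T$ is a tight $r$-tree. Throughout, I would use the guiding observation that in any $r$-partition every edge is \emph{rainbow}, i.e.\ has exactly one vertex in each of the $r$ classes. The point is that when a new edge $e_i = D \cup \{v_i\}$ is added, where $D := e_i \setminus \{v_i\}$ is an $(r-1)$-subset of some edge $f$ of $T_{i-1}$, the class of the new vertex $v_i$ is completely determined by where the $r-1$ vertices of $D$ already sit.

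First I would treat the base case: $T_1$ consists of the single edge $e_1$ on $r$ vertices, and any $r$-partition must place these $r$ vertices into $r$ distinct classes, one per class. Hence the partition into $r$ singletons is the unique $r$-partition of $T_1$.

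For the inductive step, suppose $T_{i-1}$ has a unique $r$-partition $\mathcal{P}$. Given any $r$-partition $\mathcal{Q}$ of $T_i$, its restriction to $T_{i-1}$ is an $r$-partition of $T_{i-1}$, hence equals $\mathcal{P}$ by the inductive hypothesis. Since $f$ is rainbow under $\mathcal{P}$, its $(r-1)$-subset $D$ meets exactly $r-1$ distinct classes, leaving a single class $C$ untouched. As $e_i = D \cup \{v_i\}$ must be rainbow under $\mathcal{Q}$ and $v_i$ is a new vertex, $v_i$ is forced into $C$; this pins down $\mathcal{Q}$ and gives uniqueness. For existence I would simply extend $\mathcal{P}$ by adding $v_i$ to $C$: then $e_i$ is rainbow, and every other edge of $T_i$ is an edge of $T_{i-1}$ and hence rainbow under $\mathcal{P}$, so the extension is a valid $r$-partition of $T_i$.

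There is no serious obstacle here; the argument is a clean induction, and the heart of it is the single forced choice for $v_i$ at each step. The only points requiring care are bookkeeping. First, an $r$-partition should be read as an \emph{unordered} partition of the vertex set into exactly $r$ nonempty classes, so that the conclusion is genuine uniqueness (the singleton partition of $T_1$ is literally the only one, and each later step adds $v_i$ to one specific existing class), and every class stays nonempty because it already contains a vertex of $e_1$. Second, although the induction runs along a particular construction order, the uniqueness it establishes is a property of the hypergraph $T$ itself: any $r$-partition of $T$ restricts along the sequence and is therefore forced, so $T$ admits exactly one $r$-partition no matter how it was built.
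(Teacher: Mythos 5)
Your proof is correct. The paper states this fact without proof (calling it ``immediate from the definition''), and your induction along the construction sequence --- with the new vertex $v_i$ forced into the unique class missed by the $(r-1)$-set $e_i\setminus\{v_i\}$ --- is exactly the intended argument, carefully written out.
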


Now we are ready to prove our main result.

\begin{proof}[Proof of Theorem~\ref{r-part}]
Assume we are given an $r$-graph $H$, and a tight $r$-tree $T$ with $t$ edges.
Consider the $r$-partition of~$T$ given by Fact~\ref{tunique}, and let $t_1, t_2, \ldots, t_r$ be the sizes of the partition classes. Thus $$t_1 + t_2+ \ldots + t_r=t+r-1.$$ 

We apply Lemma~\ref{r-part_mindeg} to see that there is an $r$-graph $H'\subseteq H$ with $r$-partition $V_1\cup V_2\cup \ldots\cup V_r$, such that for each $i\le r$, any element of $\partial H'$ which avoids $V_i$ is contained in at least~$t_i$ edges of $H'$ (each containing a different vertex from~$V_i$). 
We may therefore embed $T$ following its natural order $v_1, v_2, \ldots, v_{t+r-1}$. At every step $j$, there is an unoccupied vertex we can choose as the image of~$v_j$, because the total number of vertices from $V(T)$ we need to embed in any fixed class $V_i$ is at most $t_i$.
\end{proof}

It only remains to prove Lemma~\ref{r-part_mindeg}.

\begin{proof}[Proof of Lemma~\ref{r-part_mindeg}]
We may assume
that $t_1\le t_2\le \ldots \le t_r$. Set $$\delta_i:= t_i-\frac{t+r-1}r,$$ for all $i=1,\ldots, r$. Clearly, we have
\begin{equation}\label{delta_r}
\delta_1\le \delta_2\le \ldots \le \delta_r\text{ \ and \ }\delta_1\le 0,
\end{equation} 
 and moreover,
\begin{equation}\label{delta}
\sum_{i=1}^r\delta_i= 0.
\end{equation} 

We now turn to the $r$-graph  $H$, with its $r$-partition $V_1\cup V_2\cup \ldots\cup V_r$. We let $h_i$ denote the number of elements of $\partial H$ that avoid $V_i$, for each $i\in[r]$.
Clearly, 
\begin{equation}\label{h_i}
\text{$\sum_{i=1}^r h_i=|\partial H|$,}
\end{equation} 
 and
after possibly relabeling the partition classes of $H$, we may assume that    
\begin{equation}\label{delta-h}
\text{$h_1\ge h_2\ge \ldots \ge h_r$.}
\end{equation}

Let $E_0$ denote the set of all edges of $H$.
For $j\ge 1$, we inductively define the set $E_j$ as follows. If there is an $(r-1)$-set $S\subseteq V(H)$ missing~$V_i$ and contained in at least one, but less than $t_{i}$ edges from $E_{j-1}$, then we set  $E_j:=\{e\in E_{j-1} : e\not\supseteq S\}$.
If there is no set $S\subseteq V(H)$ as above, we terminate the process, and set $E:=E_{j-1}$.

Observe that every $(r-1)$-subset $S$ of $V(H)$ appears in at most one of the steps $j$ as the reason for deleting edges, and in that step, we deleted at most $t_{i}-1$ edges, where $i$ is such that $S$ misses $V_i$. Also, $(r-1)$-sets $S\notin \partial H$ never appear.

Therefore, 
$$|E(H)|  \ \le \  |E| + \sum_{i=1}^r  (t_i-1) h_i.$$

We claim that 
\begin{equation}\label{E}
E\neq\emptyset.
\end{equation} 
Then, we can take $H'$ to be the subhypergraph induced by the edges in $E$, and are done. So it only remains to prove~\eqref{E}.

 In order to see~\eqref{E}, note that otherwise, by our assumption on the number of edges of $H$, we have that
\begin{align*}
\frac{t-1}r|\partial H|\ < \ |E(H)|  \ \le \  \sum_{i=1}^r  (t_i-1) h_i
\ &\le \  \sum_{i=1}^r ( \frac{t+r-1}r-1+\delta_i) h_i \\
\ &\le \  \frac{t-1}r\cdot \sum_{i=1}^r  h_i +\sum_{i=1}^r \delta_i  h_i ,
\end{align*}
and so,  using~\eqref{h_i}, we obtain that
\begin{equation}\label{zero}
\sum_{i=1}^r \delta_i  h_i \ > \ 0.
\end{equation} 
By~\eqref{delta_r}, we can choose an index~$i\in[r]$ such that $\delta_i\le 0$  for all $i\le i^*$ and $\delta_i> 0$ for all $i>i^*$. 
 This  choice of $i^*$, together with~\eqref{delta-h} and~\eqref{delta},  enables us to calculate that
\begin{align*}
\sum_{i=1}^r \delta_i  h_i \  
=\ \sum_{i=1}^{i^*}\delta_i  h_i  + \sum_{i=i^*+1}^r \delta_i  h_i \ 
& \le \    \sum_{i=1}^{i^*} \delta_i h_{i^*} +  \sum_{i=i^*+1}^r \delta_i h_{i^*}\\ 
& \le \   h_{i^*} \cdot \sum_{i=1}^{r} \delta_i  \\ &=0,
\end{align*}
a contradiction to \eqref{zero}. This proves~\eqref{E}, thus completing the proof of the lemma.
\end{proof}

\section{Lower bounds for $r$-partite $r$-graphs}\label{sec:example}

The bounds from Kalai's conjecture cannot be weakened much in $r$-partite $r$-graphs.  This is asymptotically shown in Proposition~\ref{notbetter} below.
However, in this proposition, the term $\frac{t-1}r$ from Kalai's conjecture is replaced with the term $\frac{t-r+1}r$, which for $r\ge 3$ leaves us with a small gap. Possible finer scale improvements are discussed at the end of this section.
 
We call a tight $r$-tree   {\it balanced} if all its partition classes have the same size. 

\begin{proposition}\label{notbetter}
For all $r\ge 2$, $t\ge 1$ such that $t+1$ is a multiple of~$r$, and for all $\varepsilon>0$, there exists an $r$-graph $H$ with  $r$-partition $V(H)=V_1\cup V_2\cup \ldots\cup V_r$  fulfilling
\begin{equation*}\label{extre}
|E(H)|\ge\big(1-\varepsilon\big)\frac{t-r+1}r\cdot \sum_{i=1}^r\prod_{\ell\neq i}|V_\ell|
\end{equation*}
such that $H$  does not contain any balanced tight $r$-tree $T$ with $t$ edges.
\end{proposition}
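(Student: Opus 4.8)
The plan is to take $H$ to be a single complete $r$-partite $r$-graph in which one class is deliberately kept small while the remaining $r-1$ classes are made arbitrarily large; the non-containment will then follow from a rigidity property of copies of tight $r$-trees in complete $r$-partite hosts. Concretely, since $r\mid t+1$ the quantity $c:=\frac{t-r+1}{r}=\frac{t+1}{r}-1$ is a non-negative integer. If $c=0$ the asserted bound reads $|E(H)|\ge 0$ and one may take $H$ empty, so I would assume $c\ge 1$. I then fix a large integer $M$, to be chosen in terms of $\varepsilon$, set $|V_1|=c$ and $|V_2|=\dots=|V_r|=M$, and let $H$ consist of all transversal $r$-sets, so that $|E(H)|=cM^{r-1}$.

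The edge count is routine: one computes $\sum_{i=1}^r\prod_{\ell\neq i}|V_\ell|=M^{r-1}+(r-1)cM^{r-2}$, whence
\[
\frac{|E(H)|}{\sum_{i=1}^r\prod_{\ell\neq i}|V_\ell|}=\frac{cM}{M+(r-1)c},
\]
which tends to $c=\frac{t-r+1}{r}$ as $M\to\infty$. Choosing $M$ large enough that the left-hand side is at least $(1-\varepsilon)c$ gives the desired inequality $|E(H)|\ge(1-\varepsilon)\frac{t-r+1}{r}\sum_{i=1}^r\prod_{\ell\neq i}|V_\ell|$.

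The heart of the matter is showing that $H$ contains no balanced tight $r$-tree $T$ with $t$ edges. I would first establish a rigidity statement: any copy of a tight $r$-tree $T$ inside a complete $r$-partite $r$-graph maps the classes of the (unique, by Fact~\ref{tunique}) $r$-partition of $T$ bijectively onto the classes of the host, each class of $T$ landing inside a single host class. This follows by induction along the defining order $e_1,e_2,\dots$ of $T$: the image of $e_1$ is a transversal of the host and hence fixes a bijection between the $r$ classes of $T$ and the $r$ host classes; at each later step the new vertex $v_i$, together with the already-embedded vertices of $e_i\setminus\{v_i\}$, must again form a transversal, which forces $v_i$ into the unique host class prescribed by that bijection.

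Granting this, the non-containment is immediate. A balanced tight $r$-tree with $t$ edges has $t+r-1$ vertices, so, its classes being as equal as possible, each of its $r$ classes has size at least $\lfloor (t+r-1)/r\rfloor=\frac{t+1}{r}=c+1$, where I used $r\mid t+1$. Under the class-bijection above, exactly one class of $T$ is sent into $V_1$; but that class has more than $c=|V_1|$ vertices and so cannot be accommodated, whence $T\not\subseteq H$. Thus the only genuinely delicate step — the main obstacle — is the rigidity lemma: the point is precisely that a copy of a tight $r$-tree cannot "cross" the classes of a complete $r$-partite host, so it really does feel the size constraint imposed by the small class $V_1$, rather than spreading its vertices freely among the large classes.
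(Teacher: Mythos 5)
Your proof is correct, but your construction is genuinely different from (and simpler than) the paper's. The paper splits \emph{every} class $V_i$ into a small part $V_i^1$ of size $\frac{t+1}{r}-1$ and a large part $V_i^2$, and takes as edges exactly those transversals meeting precisely one of the small parts; the non-containment there comes from showing that any tight tree in that host must place one entire class of its (unique) $r$-partition inside some $V_i^1$. You instead take a plain complete $r$-partite host with a single small class of size $c=\frac{t+1}{r}-1$ and all other classes of size $M\to\infty$, and your rigidity lemma --- each class of $T$ lands in a single host class, which is really just Fact~\ref{tunique} applied to the proper colouring induced by the host partition --- does the same job. Both constructions give the ratio $\frac{t-r+1}{r}$ in the limit, and your computation $\frac{cM}{M+(r-1)c}\to c$ is right, as is your handling of the degenerate case $c=0$. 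What the paper's more elaborate, symmetric construction buys is that for $r=2$ it reproduces the exact extremal configuration of Gy\'arf\'as, Rousseau and Schelp in~\eqref{GRS} with classes of comparable sizes, and it admits the augmentations discussed after the proposition (adding edges meeting an odd number of the sets $V_i^1$); your host is extremely lopsided and does not specialise in that way, but it is shorter and entirely sufficient for the statement as written. A small bonus of your argument: since you only use $\lfloor (t+r-1)/r\rfloor=\frac{t+1}{r}>|V_1|$, you in fact exclude every tight $r$-tree with $t$ edges whose smallest partition class has at least $\frac{t+1}{r}$ vertices, not just the balanced ones.
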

\begin{proof}
 Consider the  sets $V_i^j$  for ${i\in[r]}$ and $j=1,2$, where $|V_i^1|=\frac{t+1}r-1$  and $|V_i^2|=\gamma^{-1} (\frac{t+1}r-1)$, for $\gamma:=(1-\varepsilon)^{\frac{1}{1-r}}-1$. Let $H$ be the $r$-partite $r$-graph with partition sets $V_i:=V_i^1\cup V_i^2$ (for $i\in[r]$) and all edges $\{v_1, \ldots ,v_r\}$ having the property that $v_{i^*}\in V_{i^*}^1$ for exactly one index $i^*\in[r]$, and $v_i\in V_i^2$ for all $i\neq i^*$.

  It is easy to see that no $r$-tight tree may contain vertices from both $V_i^1$ and $V_i^2$, for any $i \in[r]$. So,
 since $|V_i^1|<\frac{t+1}r$ for all $i$, we see that
 $H$ does not contain any balanced tight $r$-tree $T$ with $t$ edges. 
 
 The number of edges of $H$ is 
\begin{align*}
|E(H)|= \sum_{i=1}^r |V^1_i|\prod_{\ell\neq i} |V^2_\ell| & = r\cdot \gamma^{1-r}\big(\frac{t+1}r-1\big)^r \\ & =
(1+\gamma)^{1-r}\big(\frac{t+1}r-1\big)\cdot \sum_{i=1}^r\prod_{\ell\neq i}|V_\ell|,
\end{align*}
giving the desired bound.
\end{proof}


%
%
%

 In the example behind Proposition~\ref{notbetter}, the host $r$-graph is much larger than the $r$-tree we are looking for, and another error term hides behind the~$\varepsilon$.
On a finer scale, more improvements on Kalai's bounds might be possible for $r$-partite $r$-graphs.
 For $r=2$, 
Gy\'arf\'as, Rousseau and Schelp~\cite{GRS84} determine the extremal number of $t$-edge paths $P_t$ in bipartite graphs with partition classes of sizes $n\ge m$ (that is, the maximum number $ex(n,m;P_t)$ of edges such a bipartite graph can have without necessarily containing $P_t$). In particular, if $t\le m+1$ is odd they obtain
\begin{equation}\label{GRS}
ex(n,m;P_t)=\frac{t-1}{2}(n+m-t+1).
\end{equation}
 Yuan and Zhang~\cite{YZ} conjecture similar results  as~\eqref{GRS} hold for all trees $T$ (the exact bounds depend on how the bipartition sizes of $T$ relate to $n$ and~$m$), and  establish several special cases. 
 
 Analogous improvements might be possible for hypergraphs.
Note that the quantity from~\eqref{GRS} coincides with the number of edges of the $r$-graph from the proof of  Proposition~\ref{notbetter}, for the case $r=2$. For $r= 3$ one might add in all edges meeting all sets $V_i^1$, and the obtained $r$-graph still does not contain $T$. More generally, for $r\ge 4$ one might add in all edges meeting $V_i^1$ in an odd number of indices $i$.

\section{A better Tur\'an bound for all $2$-graphs}\label{sec:betterbound}

We now discuss an implication of  Theorem~\ref{r-part} for tree containment in graphs (i.e.~$2$-graphs) that are not necessarily bipartite. This will establish the bound~\eqref{betterbound} mentioned in the introduction.

We need some easy definitions first. We call a partition of the vertices of a graph into two sets  a {\it $2$-cut}. The {\it size} of a $2$-cut is the number of edges that cross the cut (where an edge is said to {\it cross} the cut if it has one endvertex on either side).

It is well known and easy to prove that every $m$-edge graph has a $2$-cut of size at least $\frac m2$. A random partition achieves this bound in expectation. But this is not best possible. 
 A classical result of Edwards~\cite{edwards} states that instead of only $\frac{m}2$ edges we can actually guarantee a $2$-cut of size at least $\frac{m}2+\Omega(\sqrt{m})$. Even better bounds can be achieved by excluding a fixed subgraph from $G$. Alon, Krivelevich and Sudakov~\cite{AKS05} show that if we exclude any fixed tree~$T$ from the graph $G$, the maximum number of edges crossing some $2$-cut can be bounded as follows.
 
\begin{theorem}[Alon, Krivelevich and Sudakov~\cite{AKS05}]\label{thm:aks}
Let $t>1$ and let $T$ be a $t$-edge tree. Let $G$ be a graph with $m$ edges that does not contain $T$. \\ Then $G$ has a $2$-cut of size at least $\frac m2+\frac m{2t}$ if $t$ is odd, and  of size at least $\frac m2+\frac m{2t-2}$ if $t$ is even.
\end{theorem}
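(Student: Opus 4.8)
The plan is to forget the particular shape of the tree $T$ and use $T$-freeness only to control how sparse $G$ is. First I would record the elementary fact that every graph with minimum degree at least $t$ contains \emph{every} tree with $t$ edges: root the tree and embed its vertices in an order in which each non-root vertex follows its parent; at the step that embeds a vertex, the (already placed) image of its parent has at least $t$ neighbours while at most $t-1$ vertices have been used so far, so a free neighbour is always available. Since every subgraph of a $T$-free graph is again $T$-free, this shows that every subgraph of $G$ has a vertex of degree at most $t-1$. In other words $G$ is $(t-1)$-degenerate, and in particular it is properly vertex-colourable with $t$ colours.

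Next I would reduce the max-cut estimate to a purely weighted statement on few vertices. Fix a proper colouring of $G$ with colour classes $C_1,\dots,C_t$ and form the weighted ``quotient'' graph $Q$ on these $t$ classes, assigning the pair $\{C_j,C_k\}$ a weight equal to the number of $G$-edges between $C_j$ and $C_k$; the total weight is exactly $m$. Because the colouring is proper, no edge of $G$ lies inside a class, so any $2$-cut of $Q$ lifts to a $2$-cut of $G$ of the same size, by sending each whole class to the side prescribed for it. Hence it suffices to exhibit a good $2$-cut of a weighted graph on $N:=t$ vertices whose weights sum to $m$.

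The heart of the argument is then a one-line averaging over balanced bipartitions. Choosing a uniformly random partition of the $N$ vertices of $Q$ into parts of sizes $\lceil N/2\rceil$ and $\lfloor N/2\rfloor$, a fixed pair of distinct vertices is separated with probability $\tfrac{2\lceil N/2\rceil\lfloor N/2\rfloor}{N(N-1)}$, which equals $\tfrac12+\tfrac1{2(N-1)}$ when $N$ is even and $\tfrac12+\tfrac1{2N}$ when $N$ is odd. Thus the expected weight crossing such a partition is $m\bigl(\tfrac12+\tfrac1{2(N-1)}\bigr)$ or $m\bigl(\tfrac12+\tfrac1{2N}\bigr)$ respectively, so some balanced partition, and after lifting some $2$-cut of $G$, attains this bound. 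Substituting $N=t$ gives exactly $\tfrac m2+\tfrac m{2t-2}$ for $t$ even and $\tfrac m2+\tfrac m{2t}$ for $t$ odd, as claimed.

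I do not expect a serious technical obstacle; the only real insight is the first reduction, namely recognising that the precise shape of $T$ is irrelevant and that $T$-freeness should be exploited solely through $(t-1)$-degeneracy, hence through $t$-colourability. Once that is seen, the parity split in the statement is forced by the parity of the number $t$ of colour classes and is produced automatically by random balanced bipartitions. The one point that genuinely needs checking is that the quotient-and-lift step loses no edges, and this holds precisely because the colouring is proper.
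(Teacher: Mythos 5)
Your argument is correct, but note that the paper does not prove this statement at all: Theorem~\ref{thm:aks} is quoted as a black box from Alon, Krivelevich and Sudakov~\cite{AKS05}, so there is no in-paper proof to compare against. What you have written is essentially the standard (and, as far as I recall, the original) argument for the tree case: $T$-freeness forces every subgraph to have a vertex of degree at most $t-1$ (since minimum degree $t$ lets one embed any $(t+1)$-vertex tree greedily), hence $G$ is $(t-1)$-degenerate and properly $t$-colourable, and averaging over balanced bipartitions of the $t$ colour classes yields a crossing fraction of $\tfrac{2\lceil t/2\rceil\lfloor t/2\rfloor}{t(t-1)}$, which is exactly $\tfrac12+\tfrac1{2t}$ for $t$ odd and $\tfrac12+\tfrac1{2t-2}$ for $t$ even. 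All the individual steps check out, including the degree count in the greedy embedding (at the step embedding a new leaf, at most $t-1$ of the parent's $t$ neighbours are already occupied) and the separation probability for a uniformly random balanced bipartition; the lifting step indeed loses nothing because a proper colouring has no monochromatic edges. So the proposal is a complete and self-contained proof of the cited theorem.
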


We can use Theorem~\ref{thm:aks} to improve the bound  from~\eqref{badbound}. The following proposition proves the bound~\eqref{turan} we mentioned in the introduction.

\begin{proposition}\label{betterbound}
Let $t\in\mathbb N$ and let $G$ be a graph on $n$ vertices with more than $(1-\frac{1}{t+1})(t-1)n$ edges if $t$ is odd, and with more than $(1-\frac{1}{t})(t-1)n$ edges if $t$ is even.
Then $G$ contains every tree $T$ having $t$ edges.
\end{proposition}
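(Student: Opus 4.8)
The plan is to prove the contrapositive: assuming $G$ has $m$ edges and contains no copy of some fixed $t$-edge tree $T$, I will bound $m$ from above by the quantity in the statement. The strategy is to extract a dense \emph{bipartite} subgraph of $G$ via the excluded-tree cut bound of Alon, Krivelevich and Sudakov, and then feed that subgraph into the $r=2$ instance of Theorem~\ref{r-partK}. For $t=1$ the statement is trivial (a single edge is itself a $1$-edge tree), so I would assume $t\ge 2$, which is what Theorem~\ref{thm:aks} requires.

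First I would apply Theorem~\ref{thm:aks} to $G$: since $G$ omits $T$, there is a $2$-cut $(V_1,V_2)$ of size at least $\frac m2+\frac m{2t}$ when $t$ is odd, and at least $\frac m2+\frac m{2t-2}$ when $t$ is even. Let $H$ be the bipartite graph with parts $V_1,V_2$ whose edges are exactly those of $G$ crossing this cut; then $|E(H)|$ equals the cut size, hence is at least the corresponding bound above. Since $H$ is a subgraph of $G$, it too contains no copy of $T$.

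Next I would apply the contrapositive of Theorem~\ref{r-partK} with $r=2$ to the bipartite graph $H$. Because $H$ avoids the $t$-edge tree $T$, the theorem forces $|E(H)|\le\frac{t-1}2|\partial H|$. Here, with $r=2$, the shadow $\partial H$ consists of $1$-element sets, i.e.~of the non-isolated vertices of $H$, so $|\partial H|\le n$ and therefore $|E(H)|\le\frac{t-1}2\,n$.

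It then remains to combine the lower and upper bounds on $|E(H)|$. In the odd case, $\frac{m(t+1)}{2t}\le\frac{t-1}2 n$ rearranges to $m\le\frac{t}{t+1}(t-1)n=(1-\tfrac1{t+1})(t-1)n$, and in the even case $\frac{mt}{2(t-1)}\le\frac{t-1}2 n$ rearranges to $m\le\frac{t-1}{t}(t-1)n=(1-\tfrac1t)(t-1)n$; this is exactly the bound~\eqref{turan}. The argument is short and poses no serious obstacle; the only point requiring care is that the numerical factors coming out of the two imported theorems multiply together to land precisely on the advertised constants, so I would verify the rearrangement separately for each parity.
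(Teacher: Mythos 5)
Your proposal is correct and follows essentially the same route as the paper: both apply the Alon--Krivelevich--Sudakov cut bound (Theorem~\ref{thm:aks}) to the $T$-free graph and then invoke Theorem~\ref{r-partK} with $r=2$ on the bipartite graph spanned by the cut, using $|\partial H|\le n$. The only cosmetic difference is that you argue via the contrapositive while the paper argues directly; the arithmetic in both parity cases checks out.
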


\begin{proof}
We may assume that $t>1$. We only treat the case when $t$ is odd, as the other case is very similar. 

Given $G$ and $T$, we use Theorem~\ref{thm:aks} to either find a copy of $T$ in $G$, or to obtain a $2$-cut of $G$ having size greater than $$\frac{\frac{t}{t+1}(t-1)n}{2}+\frac{\frac{t}{t+1}(t-1)n}{2t}= \frac{t-1}2n.$$ In the latter case, we apply Theorem~\ref{r-part} to the graph induced by this $2$-cut to see that it contains $T$.
\end{proof}

Variants of Edwards' results for hypergraphs have been studied by
Erd\H os and Kleitman~\cite{EK}. They showed that in an $m$-edge $r$-graph, the expected size of an $r$-cut is $\frac{r!}{r^r}m$ (where an {\it $r$-cut} is a partition of the vertices into $r$ sets, edges {\it cross} the cut if they have one vertex in each partition set, and the {\it size} of an $r$-cut is the number of crossing edges).
Recently,  Conlon, Fox, Kwan and Sudakov~\cite{CFKS19}  improved this bound. In particular, they obtain that for $r\ge 3$, every $m$-edge $r$-graph has an $r$-cut of size at least $\frac{r!}{r^r}m+\Omega(m^{\frac 59})$. They conjecture the exponent in the second term can be improved to  $\frac 23$.
Unfortunately, we are not aware of any result in the spirit of Theorem~\ref{thm:aks} for hypergraphs 
and the bound from~\cite{CFKS19} alone does not seem to suffice to prove a meaningful version of Proposition~\ref{betterbound} for $r$-graphs with $r\ge 3$.

\bibliographystyle{acm}
\bibliography{trees}

\end{document}